\documentclass[11pt]{article}
\usepackage{amsfonts}
\usepackage{amssymb}
\usepackage{amsthm}								 
\usepackage{latexsym}
\usepackage{ae}                    
\usepackage{tabularx}              
\usepackage{enumerate}             
\usepackage{enumitem}							 
\usepackage{bbm}									 
\usepackage[arrow,matrix,curve]{xy}
\usepackage{makeidx}
\usepackage[final,dvips]{graphicx}
\usepackage{fancyhdr}
\usepackage[latin1]{inputenc}
\usepackage{amsmath}

\voffset=-1.5cm
\hoffset=-0.5cm
\textheight=24cm
\headheight=14pt
\textwidth=17cm
\oddsidemargin=0pt
\evensidemargin=0pt \headsep=0pt

\setcounter{tocdepth}{2}

\newtheoremstyle{SatzmitAbstand}
{10pt}    
{10pt}    
{\itshape}     
{25pt}    
{\bfseries} 
{:}     
{5pt} 
{}     
 
\theoremstyle{SatzmitAbstand}

\newtheorem{theorem}{Theorem}[section] 
\newtheorem{proposition}[theorem]{Proposition}
\newtheorem{corollary}[theorem]{Corollary}

\newtheorem{lemma}[theorem]{Lemma}

\newtheorem{definition}[theorem]{Definition}

\newtheorem*{theorem*}{Theorem}

\newcommand{\HH}{\mathcal{H}}
\newcommand{\ZZ}{\mathbb{Z}}
\newcommand{\KK}{\mathcal{K}}
\newcommand{\MM}{\mathcal{M}}
\newcommand{\CC}{\mathbb{C}}

\newcommand{\NN}{\mathbb{N}}

\newcommand{\BH}{\mathcal{B}(\mathcal{H})}
\newcommand{\CZK}{C(\prod_{j=1}^\infty S^2,\KK)}
\newcommand{\MCZK}{\MM(C(\prod_{j=1}^\infty S^2,\KK))}
\newcommand{\norm}[1]{\left\|#1\right\|}



\begin{document}


\begin{center}
{\bf \huge On certain multiplier projections}
\end{center}

\vspace{0.3cm}

\begin{center}
{\it \large Henning Petzka}
\end{center}

\vspace{0.3cm}

\begin{abstract}
\noindent Let $\MCZK$, denote the multiplier algebra over $\CZK$, the algebra of continuous functions into the compact operators with spectrum the infinite product of two-spheres. We consider multiplier projections in $\MCZK$ of a certain diagonal form. We show that, while for each multiplier projection $Q$ of the special form, we have that $Q(x)\in\BH\setminus \KK$ for all $x\in \prod_{j=1}^\infty S^2$, the ideal generated by $Q$ in $\MCZK$ might be proper. We further show that the ideal generated by a multiplier projection of the special form is proper if and only if the projection is stably finite.
\end{abstract}

\section{Introduction}
The C*-algebra $\CZK$ of continuous functions into the compact operators with spectrum the infinite product of two-spheres has been of interest in the construction of C*-algebras with non-regular behavior. M. R\o rdam used this algebra in \cite{R1} to construct a separable simple C*-algebra with both a finite and a (non-zero) infinite projection. In \cite{R3} R\o rdam used it to construct an extension 
$$0\rightarrow \CZK\rightarrow B\rightarrow \KK\rightarrow 0$$
such that $B$ is not stable (despite the fact that both, ideal and quotient, are stable C*-algebras). Also, R\o rdam's construction  in \cite{R2} of  a non-stable C*-algebra, which becomes stable after tensoring it with large enough (non-zero) matrix algebras, can be altered to using comparability properties of projections in matrix algebras over $\CZK$.\\

\noindent Most constructions have in common that they take advantage of special multiplier projections of a certain diagonal form. The projections considered are infinite direct sums 
\begin{equation*}\tag{$\ast$}\label{*}
Q=\bigoplus_{j=1}^\infty p_{I_j}, 
\end{equation*}
where each direct summand $p_{I_j}$ is a finite tensor product of Bott projections over coordinates specified by a finite subset $I_j$ of the natural numbers. (We remind the reader of the detailed construction in the following section.)  Using multiplier projections of this certain form, R\o rdam proves in \cite{R1} that there exists a finite full multiplier projection in $\MCZK$ (and thereby showing that the C*-algebra $\CZK$ does not have the corona factorization property).  Recall that a projection in a C*-algebra is called full, if the closed two-sided ideal generated by it is the whole C*-algebra. Fullness of a projection in the multiplier algebra implies that some multiple of it is equivalent to the identity (\cite{RLL}, Exercise 4.8). But the multiplier unit of a stable C*-algebra is properly infinite (\cite{R5}, Lemma 3.4). Hence, R\o rdam's finite full projection is not stably finite. (A projection is stably finite if any multiple of it is a finite projection).\\

\noindent In this paper we investigate non-full multiplier projections in $\MCZK$ of the special form as in ($\ast$). Firstly, it is all but obvious that there exist non-full projections of this diagonal form at all. Identifying $\MCZK$ with the strictly continuous functions from $\prod_{j=1}^\infty S^2$ into $\BH$, any multiplier projection $Q$ of the certain diagonal form satisfies that $Q(x)\in\BH\setminus \KK$. In particular, locally, $Q(x)$ is full in $\BH$ for all $x\in X$.(It follows from the results of Pimsner, Popa and Voiculescu \cite{PPV} that such a projection cannot be found when the spectrum is finite-dimensional.)\\

\noindent Using the techniques from \cite{R1} we then prove the following result:
\begin{theorem*}
Let $$Q=\bigoplus_{j=1}^\infty p_{I_j}\in \MCZK.$$
Then $Q$ is non-full if, and only if, $Q$ is stably finite.
\end{theorem*}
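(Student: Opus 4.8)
The plan is to prove the two implications separately, after two reductions that let me pass freely between $Q$ and its multiples. First, for every $n\geq 1$ the projection $nQ=\bigoplus_{j}p_{I_j}^{\oplus n}$ is again of the special form $(\ast)$ — relabel the sequence $(I_j)$ so that each set is repeated $n$ times — and $Q$ and $nQ$ generate the same closed ideal: indeed $Q\leq nQ$, while each of the $n$ summands of $nQ$ is equivalent to $Q$ and hence lies in the ideal generated by $Q$. Thus $Q$ is full if and only if $nQ$ is full. Granting this, the theorem reduces to the single statement \emph{(CRUX) every infinite projection $P$ of the special form is full}, together with the converse implication that a full $P$ is not stably finite. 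Indeed, if these hold and $Q$ is not stably finite, then some $nQ$ is infinite, hence full by (CRUX), hence $Q$ is full; and conversely full $\Rightarrow$ not stably finite gives stably finite $\Rightarrow$ non-full.

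The converse implication is the short direction and uses only the facts recalled in the introduction: if $Q$ is full then, by \cite{RLL}, Exercise~4.8, a multiple $nQ$ is equivalent to the unit $1$ of $\MCZK$; since $\CZK$ is stable, $1$ is properly infinite by \cite{R5}, Lemma~3.4, so $nQ$ is properly infinite, in particular infinite, and $Q$ is not stably finite.

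For (CRUX) I would work with the finite truncations $P_m=\bigoplus_{j=1}^m \mathrm{ran}\,p_{I_j}$, regarded as genuine finite-rank vector bundles over the finite product $X_m=\prod_{i\in I_1\cup\dots\cup I_m}S^2$. Each $\mathrm{ran}\,p_{I_j}$ is a line bundle with first Chern class $c_1=\sum_{i\in I_j}x_i\in H^2(X)$, where $H^*(X)=\bigotimes_i\ZZ[x_i]/(x_i^2)$ and $x_i$ generates the second cohomology of the $i$-th sphere. Hence the total Chern class of $P_m$ is $\prod_{j=1}^m\bigl(1+\sum_{i\in I_j}x_i\bigr)$, and because $x_i^2=0$ its top component $\prod_{j=1}^m\bigl(\sum_{i\in I_j}x_i\bigr)$ is nonzero exactly when $I_1,\dots,I_m$ admit a system of distinct representatives (Hall's condition). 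Nonvanishing of this Euler class obstructs a nowhere-zero section, so the largest $k$ with $\theta_k\precsim P_m$ ($\theta_k$ the trivial rank-$k$ projection) is controlled by the Hall defect of the family; by obstruction theory over the $2|I_1\cup\dots\cup I_m|$-dimensional base one splits off at least $m-|I_1\cup\dots\cup I_m|$ trivial lines, with the analogous bound $nm-|I_1\cup\dots\cup I_m|$ for $nQ$. The extraction must, however, be carried out \emph{block-wise} rather than on the full truncation: the heart of the argument is to show that infiniteness of $P$ forces these trivial ranks to be unbounded along a sequence of pairwise orthogonal blocks of summands. Granting this, I would partition the index set into consecutive finite blocks $B_1,B_2,\dots$ so that $\bigoplus_{j\in B_\ell}\mathrm{ran}\,p_{I_j}$ contains a trivial subprojection $\theta_{r_\ell}$ with $\sum_\ell r_\ell=\infty$; since the blocks use disjoint summands the corresponding partial isometries are orthogonal and assemble, using that $P$ is the strict limit of its truncations, into a single multiplier partial isometry witnessing $\theta_\infty\sim 1\precsim P$, so that $P$ is full.

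The main obstacle is exactly the implication \emph{infinite $\Rightarrow$ unbounded block trivial ranks}, together with its finiteness counterpart and the coherent assembly. Concretely, one must show that if for every $n$ all truncations of $nQ$ carry only boundedly many trivial lines — equivalently, the top Chern classes persist — then no multiple $nQ$ admits a proper isometry, so that $Q$ is stably finite: a proper isometry would have to annihilate a persistent Euler obstruction. Two features make this delicate. The base $X$ is infinite-dimensional, so the naive dimension bounds that kill higher section obstructions are unavailable, and one must instead exploit the product relations $x_i^2=0$ directly. Moreover, the partial isometry realizing infiniteness is only strictly continuous, and its defect projection $R$ (which yields $P\succsim\infty\cdot R$ by the usual Murray--von Neumann iteration) has a support $\{R\neq 0\}$ that may be a proper open subset of $X$; passing from this localized relation to the global domination $1\precsim P$ requires a covering or partition argument over all of $X$, or equivalently the upgrade of infiniteness to proper infiniteness for diagonal projections of the form $(\ast)$.
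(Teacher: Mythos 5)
Your overall architecture is sound and in fact mirrors the paper's own: the reduction of the theorem to the easy implication (full $\Rightarrow$ not stably finite, via \cite{RLL}, Exercise 4.8, and proper infiniteness of the multiplier unit of a stable C*-algebra, \cite{R5}) plus your statement (CRUX) is exactly how Corollary \ref{nonfullequalsstablyfinite} is organized; your Chern-class computation and the Hall-type criterion for extracting trivial subprojections from truncations are Propositions \ref{infiniteeisnotsubprojection} and \ref{infiniteexactnumberoftrivialsubprojections} (proved there via \cite{Hu}, Theorem 8.1.2, together with a tensoring trick using the conjugate Bott projections $p^-$); and the block-wise assembly of unboundedly many trivial lines into a single strictly convergent partial isometry witnessing $\mathbbm{1}\preceq Q$ is Lemma \ref{Ngsmallerenough}. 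All of this part of your proposal is correct.

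The gap is the step you yourself flag as ``the main obstacle'' and then leave as a statement of intent: that infiniteness of a projection of the form $(\ast)$ forces unbounded (block) trivial ranks --- equivalently, that $k\cdot g\npreceq Q$ for some $k$ implies $Q$ is finite. This is not a residual technicality to be handled by ``a covering or partition argument''; it is precisely the content of Theorem \ref{finitemultiplierproj}, it occupies all of Section \ref{sectionstablyfinite}, and the paper states explicitly that no direct argument of the kind you gesture at seems possible. The difficulties you name are real --- in the non-simple algebra $\MCZK$ an infinite projection need not be properly infinite, and the partial isometry witnessing $Q\sim Q_0\lneq Q$ is merely strictly continuous, mixes all the diagonal summands, and so cannot be confronted directly with Euler-class obstructions living on finite subproducts --- but naming them does not overcome them. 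The paper's resolution is a genuine detour: it constructs a R\o rdam-type injective unital endomorphism $\varphi$ of $\MCZK$ (built from an injective index map $\nu$, auxiliary tensor factors $p_{B_j}$ whose cardinality is matched to the Hall defect $k$ of the family $(I_j)$, and dense families of evaluation points) whose inductive limit $B$ is simple; it shows by an inductive application of Hall's marriage theorem that every iterate $\varphi^m(Q)$ still admits a system of distinct representatives, hence satisfies $g\npreceq\varphi^m(Q)$ by Proposition \ref{infiniteeisnotsubprojection}, hence is not properly infinite by Lemma \ref{notproperlyinfinite}; and it then invokes Cuntz's theorem \cite{C} that in a simple C*-algebra every infinite projection is properly infinite, so the image of $Q$ in $B$ is finite, and finiteness pulls back along the injective connecting maps (Lemma \ref{finitenessargument}). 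Nothing in your proposal substitutes for this construction, so as written you have established only the implication ``stably finite $\Rightarrow$ non-full'' together with the combinatorial characterization of fullness; the converse, which is the substance of the theorem, remains open.
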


The paper is organized as follows. In Section \ref{preliminaries} we recall notation and constructions from \cite{R1} and specify the multiplier projections the paper is devoted to. Section \ref{technicalresult} contains the technical tool to prove our main results. In Section \ref{sectionnonfull} we characterize non-fullness of multiplier projections in a combinatorial way. Finally, Section \ref{sectionstablyfinite} contains the proof of the main theorem, i.e. we show that all non-full projections from section \ref{sectionnonfull} are stably finite.

\section{Preliminaries}\label{preliminaries}

Consider the following setting (and notation), which is adapted from \cite{R1}. We will consider the Hausdorff space given by an infinite product of two-spheres, $X=\prod_{j=1}^\infty S^2$, equipped with the product topology. Since $S^2$ is compact, it follows from Tychonoff's Theorem (see for example \cite{M}) that $X$ is compact. Let further 
$$p\in \mbox{C}(S^2,M_2(\CC)) \mbox{ denote the Bott projection,}$$
i.e., the projection corresponding to the `Hopf bundle' $\xi$ over $S^2$ with total Chern class $c(\xi)=1+x$ (see e.g. \cite{K}). 

With $\pi_n:X\rightarrow S^2$ denoting the coordinate projection onto the $n$-th coordinate, consider the (orthogonal) projection
$$p_n:=p\circ \pi_n \in \mbox{C}(\prod_{j=1}^\infty S^2,M_2(\CC)).$$
If $I\subseteq \NN$ is a finite subset, $I=\{n_1,n_2,\ldots, n_k\}$, then let $p_I$ denote the pointwise tensor product 
$$p_I:=p_{n_1}\otimes p_{n_2} \otimes \ldots \otimes p_{n_k}\ \in \mbox{C}\left (\prod_{j=1}^\infty S^2,M_2(\CC)\otimes M_2(\CC)\otimes\ldots \otimes M_2(\CC)\right ).$$

It is shown in \cite{R1} that the projection $p_n$ corresponds to the pull-back of the Hopf bundle via the coordinate projection $\pi_n$, denoted by $\xi_n:=\pi_n^*(\xi)$, and that the projection $p_I$ corresponds to the tensor product of vector bundles $\xi_{n_1}\otimes \xi_{n_2}\otimes \ldots \otimes \xi_{n_k}$.

Considering the compact operators $\KK$ on a separable Hilbert space as an AF algebra, the inductive limit of the sequence
$$\CC\rightarrow M_2(\CC)\rightarrow M_3(\CC)\rightarrow M_4(\CC)\rightarrow \ldots ,$$
with connecting $^*$-homomorphisms mapping each matrix algebra into the upper left corner of any larger matrix algebra at a later stage, we get an embedding of each matrix algebra over $\CC$ into the compact operators $\KK$. In this way we can consider all the projections $p_n$ and $p_I$, defined as above, as projections in C$(\prod_{j=1}^\infty S^2,\KK)$.

In addition to the setting of \cite{R1}, let us denote by $p^{-}$ the projection corresponding to the complex line bundle $\xi^{-}$ over $S^2$ with total Chern class $c(\xi^-)=1-x$. (Recall that the first Chern class is a complete invariant for complex line bundles (see Proposition 3.10 of \cite{H}).) The tensor product $\xi\otimes\xi^-$ is isomorphic to the one-dimensional trivial bundle, because its Euler class can be computed, using \cite{R1} Equation 3.3, to be
$$e(\xi\otimes\xi^-)=x-x=0$$
and the only line bundle with zero Euler class is the trivial bundle $\theta_1$ (\cite{H} Proposition 3.10). Accordingly, the projection in $C(X,M_4(\CC))$  that is given by the pointwise tensor product of $p$ and $p^{-}$ is equivalent to a 1-dimensional constant projection.

We finally define $p_n^{-}\in C(\prod_{j=1}^\infty S^2,\KK)$ as $p_n^{-}:=p^{-}\circ \pi_n$.\\

\noindent The following well known result can be found for example in \cite{L}.

\begin{lemma}\label{multiplierarefunctionsintobh}
Let $X$ be a compact Hausdorff space. Let $C(X,\KK)$ denote the continuous functions from $X$ into the compact operators $\KK$ on a separable Hilbert space $\mathcal{H}$. Let further $C_{*\mbox{-}s}(X,\BH)$ denote the  $^*$-strongly continuous (or strictly continuous) functions from $X$ into the bounded operators on the Hilbert space $\mathcal{H}$. Then
$$\mathcal{M}(C(X,\KK))\cong C_{*\mbox{-}s}(X,\BH).$$
\end{lemma}

We will often take advantage of identifying $C(X,\KK)$ with $C(X)\otimes \KK$ . For instance it is then immediate to see stability of $C(X,\KK)$.\\


\noindent For any stable C*-algebra $A\cong A\otimes \KK(\HH)$ we can embed the algebra of all bounded operators $$\BH\cong \mathbbm{1}_{\mathcal{M}(A)}\otimes\BH$$ into $\mathcal{M}(A)$ (see e.g. \cite{L} Chapter 4).  Hence, we can find a sequence $\{S_j\}_{j=1}^\infty$ of isometries with orthogonal range projections in $\mathcal{M}(A)$ such that the range projections sum up to the identity of $\mathcal{M}(A)$ in the strict topology:
$$S_j^*S_j=\mathbbm{1}_{\mathcal{M}(A)}\mbox{ for all $j$, and } \sum_{j=1}^\infty S_jS_j^*=\mathbbm{1}_{\mathcal{M}(A)}.$$
Using such a sequence we can define infinite direct sums of projections in $A$: 

For a sequence $\{p_j\}_{j=1}^\infty$ of projections in $A$  we define 
$$\bigoplus_{j=1}^\infty p_j:=\sum_{j=1}^\infty S_j p_j S_j^*\in\mathcal{M}(A).$$
The sum is strictly convergent and hence defines a projection in the multiplier algebra of $A$, which, up to unitary equivalence, is independent of the chosen isometries (\cite{R1}, page 10). Also, its unitary equivalence class in the ordered Murray-von Neumann semigroup is independent of permutations of the direct summands (see Lemma 4.2 of \cite{R1}).

For fixed projections $Q\in\mathcal{M}(A)$ we will denote the direct sum $\underbrace{Q\oplus Q\oplus \ldots \oplus Q}_{\mbox{$m$ times}}$ of $Q$ with itself by $m\cdot Q$. \\

\noindent We are now ready to specify the multiplier projections this paper is devoted to and which were considered by R\o rdam in \cite{R1} and \cite{R3}: All our results are for multiplier projections given by
\begin{equation*}\tag{$\ast$}
Q=\bigoplus_{j=1}^\infty p_{I_j},
\end{equation*}
where each $p_{I_j}$ is a tensor product of Bott projections as above.

\section{Technical result}\label{technicalresult}

The following result is basically contained in \cite{R1} by a combination of Proposition 3.2 with Proposition 4.5 from that paper. It makes it possible to check minorization of projections as in ($\ast$) by trivial projections in C$(\prod_{j=1}^\infty S^2,\KK)$ in purely combinatorial terms. By a trivial projection we mean a projection that is equivalent to a constant one (i.e., any projection that corresponds to a trivial complex vector bundle). We will denote trivial 1-dimensional projections in C$(\prod_{j=1}^\infty S^2,\KK)$  by $g$. Recall that for any non-empty finite subset $I$ of $\NN$ we denote by $p_I$ the tensor product of Bott projections over the coordinates given by $I$.

\begin{proposition}\label{infiniteeisnotsubprojection}
Let $I_j,\ j\in\NN$, be finite subsets of $\NN$, and consider the multiplier projection $Q$ in $\mathcal{M}(C(\prod_{j=1}^\infty S^2,\KK))$ given by
$$Q=\bigoplus_{j=1}^\infty p_{I_j}.$$
Then the following statements are equivalent:
\begin{itemize}
\item[(i)] $g \npreceq Q=\bigoplus_{j=1}^\infty p_{I_j}$.
 \item[(ii)] $\left |\bigcup_{j\in F}I_j \right |\geq |F|$ for all finite subsets $F\subseteq \NN$.
\end{itemize}
\end{proposition}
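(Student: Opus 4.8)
The plan is to reduce the (infinite) multiplier condition in (i) to a condition on finite direct sums, to convert that into the vanishing of a product of Euler classes using Proposition 3.2 of \cite{R1}, and finally to recognize the resulting algebraic condition as an instance of Hall's marriage theorem. More precisely, I would first argue that $g\preceq Q$ holds if and only if $g\preceq\bigoplus_{j\in F}p_{I_j}$ for some \emph{finite} $F\subseteq\NN$. The ``if'' direction is clear, since $\bigoplus_{j\in F}p_{I_j}$ is a subprojection of $Q$; the ``only if'' direction is the content of Proposition 4.5 of \cite{R1}: because $g$ lies in the algebra $\CZK$ itself, any partial isometry $v$ with $v^*v=g$ and $vv^*\le Q$ satisfies $v=vg\in\CZK$, and strict convergence of the partial sums $\bigoplus_{j\le N}p_{I_j}\to Q$ lets one perturb $vv^*$ into a subprojection of a finite partial sum. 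Hence (i) is equivalent to: $g\npreceq\bigoplus_{j\in F}p_{I_j}$ for every finite $F$.

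Next I would invoke the topological minorization criterion. Each $p_{I_j}$ is a line bundle $\bigotimes_{n\in I_j}\xi_n$ with first Chern ($=$ Euler) class $\ell_j:=c_1\big(\bigotimes_{n\in I_j}\xi_n\big)=\sum_{n\in I_j}x_n\in H^2(X;\ZZ)$, where $x_n\in H^2(X;\ZZ)$ is the pullback of the generator of $H^2(S^2)$ along the $n$-th coordinate. By Proposition 3.2 of \cite{R1}, the trivial line $g$ is a subprojection of the finite sum $\bigoplus_{j\in F}p_{I_j}$ precisely when the Euler class of the corresponding rank-$|F|$ bundle vanishes, i.e.\ when $\prod_{j\in F}\ell_j=0$ in $H^{2|F|}(X;\ZZ)$. (The nontrivial direction, that vanishing of the top class already forces a nowhere-vanishing section, can also be seen directly: $\bigoplus_{j\in F}p_{I_j}$ is pulled back from the finite product $\prod_{n\in J}S^2$ with $J=\bigcup_{j\in F}I_j$, and whenever a subfamily has rank exceeding the real dimension of its base the Euler class is the only obstruction.)

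It then remains to compute the product. Using the K\"unneth isomorphism $H^*(X;\ZZ)\cong\bigotimes_{n}\ZZ[x_n]/(x_n^2)$, expanding $\prod_{j\in F}\ell_j=\prod_{j\in F}\big(\sum_{n\in I_j}x_n\big)$ produces a sum of square-free monomials $\prod_{j\in F}x_{\sigma(j)}$ indexed by choice functions $\sigma$ with $\sigma(j)\in I_j$; a term survives exactly when $\sigma$ is injective (otherwise some factor $x_n^2=0$ appears), and since the surviving square-free monomials are linearly independent and all coefficients are nonnegative, no cancellation occurs. Hence $\prod_{j\in F}\ell_j\ne0$ if and only if the family $\{I_j\}_{j\in F}$ admits a system of distinct representatives. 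By Hall's marriage theorem this happens if and only if $|\bigcup_{j\in F'}I_j|\ge|F'|$ for every $F'\subseteq F$. Quantifying over all finite $F$, the conjunction of these conditions is exactly (ii), which completes the chain of equivalences.

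The step I expect to require the most care is the no-cancellation argument in the cohomology computation: although each individual monomial $\prod_{j\in F}x_{\sigma(j)}$ for injective $\sigma$ is visibly nonzero, distinct injective $\sigma$ may produce the same square-free monomial, so one must argue that the total coefficient of each square-free monomial (a permanent of a matrix with entries in $\{0,1\}$) is a nonnegative integer, guaranteeing that the product vanishes if and only if no injective $\sigma$ exists at all. The remaining genuinely topological input, namely the equivalence between minorization by $g$ and vanishing of the Euler class, is imported from \cite{R1} and is where the finite-dimensionality of the relevant base product is essential.
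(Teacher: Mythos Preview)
Your argument is correct and in fact unpacks what the paper leaves as a black box. The paper's own proof is much shorter: for (ii)$\Rightarrow$(i) it simply cites Proposition~4.5(i) of \cite{R1}, and for $\neg$(ii)$\Rightarrow\neg$(i) it pulls the offending finite sum back to $\prod_{n\in J}S^2$ with $J=\bigcup_{j\in F}I_j$ and invokes Husemoller's stability theorem (Theorem 8.1.2 of \cite{Hu}) directly, never mentioning Euler classes or Hall's theorem. Your route via the Euler class expansion, the permanent/SDR identification, and Hall's marriage theorem is essentially how Proposition~4.5(i) is established inside \cite{R1}, so you are reproducing that argument rather than quoting it; this buys you a self-contained proof at the cost of length.

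Two small points. First, the finite reduction step (``$g\preceq Q$ forces $g\preceq$ a finite partial sum'') is Lemma~4.4 of \cite{R1}, not Proposition~4.5; the latter is already the combinatorial criterion you are proving. Second, your claimed biconditional $g\preceq\bigoplus_{j\in F}p_{I_j}\Leftrightarrow\prod_{j\in F}\ell_j=0$ for a \emph{fixed} $F$ does not follow from Proposition~3.2 of \cite{R1} alone: the converse direction there requires the base dimension $2|J|$ to be at most twice the rank $|F|$, which can fail when $|J|>|F|$ (your parenthetical ``rank exceeding the real dimension of its base'' is the wrong inequality). The repair is already implicit in your argument: if $\prod_{j\in F}\ell_j=0$ then your permanent computation shows no SDR exists, so Hall fails for some $F'\subseteq F$; for that $F'$ one has $\left|\bigcup_{j\in F'}I_j\right|<|F'|$, the dimension hypothesis \emph{is} satisfied, and the stability theorem yields $g\preceq\bigoplus_{j\in F'}p_{I_j}\preceq\bigoplus_{j\in F}p_{I_j}$. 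With this detour made explicit (or, equivalently, by running the two implication chains separately rather than as a single chain of biconditionals) the proof is complete.
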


\begin{proof}
That (ii) implies (i) is the content of Proposition 4.5 (i) of \cite{R1}.

If, on the other hand, there is a finite subset $F\subseteq \NN$ such that $\left |\bigcup_{j\in F}I_j \right |< |F|$, consider the subprojection $\bigoplus_{j\in F} p_{I_j}$ in $C(\prod_{j=1}^\infty S^2,\KK)$. Let $J:=\bigcup_{j\in F}I_j$. With $\pi_J$ denoting the projection onto the coordinates given by $J$, we have $\bigoplus_{j\in F} p_{I_j} = \pi_J^*(q )$ for some projection $q\in C(\prod_{j=1}^{|J|}S^2,\KK)$. The projection $q$ corresponds to a vector bundle of dimension $|F|$ over $|J|=\left |\bigcup_{j\in F}I_j \right |$-many copies of $S^2$. But then by \cite{Hu}, Theorem 8.1.2, this vector bundle majorizes a trivial bundle. In terms of projections this implies $$g=\pi_J^*(g)\preceq \pi_J^*(q)= \bigoplus_{j\in F} p_{I_j}\preceq Q$$
\end{proof}

\noindent It is possible to generalize this result. The following proposition allows to count the precise number of trivial subprojections (while Propositon \ref{infiniteeisnotsubprojection} is only good to check existence of some trivial subprojection).

\begin{proposition}\label{infiniteexactnumberoftrivialsubprojections}
Let $I_j,\ j\in\NN$, be finite subsets of $\NN$, and consider the multiplier projection $Q$ in $\mathcal{M}(C(\prod_{j=1}^\infty S^2,\KK))$ given by
$$Q=\bigoplus_{j=1}^\infty p_{I_j}.$$
Let $m\in\NN$. \\
Then the following statements are equivalent:
\begin{itemize}
\item[(i)] $m\cdot g\npreceq Q\sim \bigoplus_{j=1}^{\infty}p_{I_j}$.
\item[(ii)] $|F| < \left | \bigcup_{j\in F} I_j \right |+m$ for all finite subsets $F\subseteq\NN$. 
\end{itemize}
\end{proposition}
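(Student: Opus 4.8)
The plan is to generalize Proposition \ref{infiniteeisnotsubprojection} from the case $m=1$ to general $m$, using essentially the same two ingredients but upgraded. The key observation is that $m \cdot g \preceq Q$ is really a statement about finding a trivial $m$-dimensional subprojection (up to the identification of $m \cdot g$ with the constant $m$-dimensional projection), so the whole argument should parallel the $m=1$ case with ``dimension $1$'' replaced by ``dimension $m$'' throughout.

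For the direction (ii) $\Rightarrow$ (i), I would invoke the quantitative version of R\o rdam's Proposition 4.5 from \cite{R1}. Whereas the $m=1$ case used only part (i) of that proposition (existence of a trivial subprojection), the sharper statement there presumably controls the exact dimension of the largest trivial subprojection of a finite direct sum $\bigoplus_{j \in F} p_{I_j}$ in terms of the combinatorial quantity $|F| - |\bigcup_{j\in F} I_j|$. So assuming (ii), i.e. $|F| < |\bigcup_{j \in F} I_j| + m$ (equivalently $|F| - |\bigcup_{j\in F} I_j| \le m-1$) for every finite $F$, I would conclude that no finite subsum, and hence $Q$ itself, can contain a trivial subprojection of dimension $m$; thus $m \cdot g \npreceq Q$.

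For the contrapositive direction, suppose (ii) fails, so there is a finite set $F$ with $|F| \ge |\bigcup_{j \in F} I_j| + m$. I would then run exactly the Chern/vector-bundle argument from the proof of Proposition \ref{infiniteeisnotsubprojection}: set $J := \bigcup_{j\in F} I_j$, write $\bigoplus_{j\in F} p_{I_j} = \pi_J^*(q)$ where $q$ is a projection in $C(\prod_{j=1}^{|J|} S^2, \KK)$ corresponding to a vector bundle of dimension $|F|$ over $|J|$ copies of $S^2$, and apply \cite{Hu}, Theorem 8.1.2. The only change is that the cited theorem yields a trivial \emph{subbundle} of rank $|F| - |J| \ge m$ whenever the total dimension exceeds the base dimension by at least that amount, so here we extract a trivial subbundle of rank $m$ rather than $1$. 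Pulling back via $\pi_J$ and using that the direct sum $\bigoplus_{j \in F} p_{I_j}$ is a subprojection of $Q$, I obtain $m \cdot g = \pi_J^*(m \cdot g) \preceq \pi_J^*(q) = \bigoplus_{j\in F} p_{I_j} \preceq Q$, contradicting (i).

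The main obstacle will be pinning down the precise quantitative form of R\o rdam's Proposition 4.5 needed for (ii) $\Rightarrow$ (i): I must verify that his argument really bounds the dimension of every trivial subprojection (not merely rules out a $1$-dimensional one) by the combinatorial deficit $|F| - |\bigcup_{j\in F} I_j|$, and that this bound survives the passage to the infinite direct sum $Q$ in the multiplier algebra. The bundle-theoretic direction is routine once the rank arithmetic in \cite{Hu}, Theorem 8.1.2 is stated correctly, but the stable-rank threshold for when a rank-$|F|$ bundle over a $2|J|$-real-dimensional base splits off a trivial rank-$m$ factor should be checked against the precise inequality in (ii).
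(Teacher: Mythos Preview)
Your treatment of the direction $\neg(\mathrm{ii})\Rightarrow\neg(\mathrm{i})$ matches the paper exactly: both pull back to a finite product of spheres and invoke \cite{Hu}, Theorem 8.1.2, to split off a trivial summand of the required rank. Your worry about the stable-rank threshold is unnecessary here; the inequality $|F|\ge|\bigcup_{j\in F}I_j|+m$ is precisely what Husemoller's theorem needs.

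For $(\mathrm{ii})\Rightarrow(\mathrm{i})$, however, the paper does \emph{not} appeal to any quantitative sharpening of R\o rdam's Proposition 4.5. It instead reduces to the already-established case $m=1$ by a tensoring trick. Assuming $m\cdot g\preceq Q$, one first passes to a finite subsum $\bigoplus_{j=1}^N p_{I_j}$ (Lemma 4.4 of \cite{R1}), then picks $m-1$ fresh coordinates $k_1,\dots,k_{m-1}\notin\bigcup_{j=1}^N I_j$ and uses that $m\cdot g$ splits as $q\oplus(p_{k_1}^-\otimes\cdots\otimes p_{k_{m-1}}^-)$ for some projection $q$ (Lemma 2.3 of \cite{KN}). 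Tensoring both sides pointwise by $p_K:=p_{k_1}\otimes\cdots\otimes p_{k_{m-1}}$ collapses the $p^-$ factor to a single $g$ and converts each $p_{I_j}$ into $p_{I_j\cup K}$, yielding $g\preceq\bigoplus_j p_{I_j\cup K}$. Now Proposition \ref{infiniteeisnotsubprojection} (the $m=1$ case) produces a finite $F$ with $|F|>|\bigcup_{j\in F}(I_j\cup K)|=|\bigcup_{j\in F}I_j|+(m-1)$, contradicting (ii).

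So the obstacle you correctly flagged---whether R\o rdam's Euler-class argument upgrades cleanly to bound the \emph{dimension} of trivial subprojections---is never confronted; the paper sidesteps it entirely by trading the extra $m-1$ copies of $g$ for $m-1$ extra coordinates via the inverse Bott projections $p^-$. Your plan is not wrong in spirit, but the step you left open is genuinely the hard one, and the paper's reduction gives a complete argument without it.
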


\begin{proof}
The implication from (i) to (ii) can be seen from standard stability properties of vector bundles, as follows: Assume there is some finite subset $F$ such that $$|F| \geq \left | \bigcup_{j\in F} I_j \right |+m.$$ Then $\bigoplus_{j \in F}p_{I_j}$ is an $|F|$-dimensional subprojection of $Q$ that can be considered, using the identification of projections with vector bundles and using a pullback by the appropriate coordinate projection (as in the proof of Proposition \ref{infiniteeisnotsubprojection}), as an $|F|$-dimensional vector bundle over a base space consisting of the product of $\left | \bigcup_{j\in F} I_j \right |$ copies of $S^2$. Then Theorem 8.1.2 from \cite{Hu} proves the existence of a trivial $\left (|F|- \left | \bigcup_{j\in F} I_j \right |\right )$-dimensional subbundle. This implies  (again in terms of projections in $\mathcal{M}(C(\prod_{j=1}^\infty S^2,\KK))$):
$$m\cdot g\leq \left (|F|- \left | \bigcup_{j\in F} I_j \right |\right )\cdot g\preceq \bigoplus_{j\in F}p_{I_j}\leq \bigoplus_{j=1}^{\infty}p_{I_j}=Q.$$
\vspace{0.2cm}

Let us now prove that (ii) implies (i): By hypothesis all finite subsets $F\subseteq\NN$ satisfy 
$$|F| <\left | \bigcup_{j\in F} I_j \right |+m.$$
Assume $m\cdot g\preceq Q$. Then $m\cdot g \preceq \bigoplus_{j=1}^N p_{I_j}$  for some $N\in \NN$ by Lemma 4.4 of  \cite{R1}. Let $k_1,k_2,\ldots,k_{m-1}$ be natural numbers in $\NN\setminus \bigcup_{j=1}^N I_j.$ Then 
by Lemma 2.3 of \cite{KN} 
there exists a projection $q$ such that $$q\oplus \left (p_{k_1}^{-}\otimes p_{k_2}^{-}\otimes \ldots \otimes p_{k_{m-1}}^{-}\right )\sim m\cdot g\preceq  Q.$$
Tensoring (pointwise) both sides by $p_K:=p_{k_1}\otimes p_{k_2}\otimes \ldots \otimes p_{k_{m-1}}$, it follows that
$$\left  (q\otimes p_{k_1}\otimes p_{k_2}\otimes \ldots \otimes p_{k_{m-1}}\right )\oplus g\preceq \bigoplus_{j=1}^{\infty} p_{I_j}\otimes p_{k_1}\otimes p_{k_2}\otimes \ldots \otimes p_{k_{m-1}}.$$
In particular,
$$g\preceq \bigoplus_{j=1}^{\infty}p_{I_j}\otimes p_K=\bigoplus_{j=1}^{\infty}p_{I_j\cup K}.$$
By Proposition \ref{infiniteeisnotsubprojection} this entails that there is some finite subset $F\subseteq\NN$ such that
$$\left | \bigcup_{j\in F} I_j\cup K \right | < |F|.$$
Hence, 
$$|F|> \left | \bigcup_{j\in F} I_j\cup K \right | = \left | \bigcup_{j\in F} I_j \right | + |K|=\left | \bigcup_{j\in F} I_j \right |+(m-1).$$
But the existence of a finite subset $F$ satisfying
$$|F|\geq \left | \bigcup_{j\in F} I_j \cup K \right |+1= \left | \bigcup_{j\in F} I_j \right |+m$$ contradicts the hypothesis.
\end{proof}

\noindent If we want to consider multiples of the multiplier projection as well, we can apply

\begin{corollary}\label{exactnumberoftrivialsubprojectionsmultiple}
Let $I_j,\ j\in\NN$, be finite subsets of $\NN$, and consider the multiplier projection $Q$ in $\mathcal{M}(C(\prod_{j=1}^\infty S^2,\KK))$ given by
$$Q=\bigoplus_{j=1}^\infty p_{I_j}.$$
Let $m,n\in\NN$. \\
Then the following statements are equivalent:
\begin{itemize}
\item[(i)] $m\cdot g\npreceq n\cdot Q\sim \bigoplus_{j=1}^{\infty }n\cdot p_{I_j}$.
\item[(ii)] $n|F| < \left | \bigcup_{j\in F} I_j \right |+m$ for all finite subsets $F\subseteq\NN$.
\end{itemize}
\end{corollary}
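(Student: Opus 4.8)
The plan is to reduce the statement directly to Proposition \ref{infiniteexactnumberoftrivialsubprojections} by recognizing that $n\cdot Q$ is itself a multiplier projection of the special form $(\ast)$. Since $n\cdot Q=\bigoplus_{j=1}^\infty n\cdot p_{I_j}$ and $n\cdot p_{I_j}=p_{I_j}\oplus\cdots\oplus p_{I_j}$ ($n$ summands), I would relist the direct summands as a single sequence $\bigoplus_{k=1}^\infty p_{J_k}$, where $(J_k)$ is obtained from $(I_j)$ by repeating each set $I_j$ exactly $n$ times. Each $J_k$ is again a finite subset of $\NN$, so $n\cdot Q$ has exactly the shape required by the Proposition. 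Applying Proposition \ref{infiniteexactnumberoftrivialsubprojections} to $n\cdot Q$ then shows that statement (i) is equivalent to
$$|F'| < \left|\bigcup_{k\in F'} J_k\right| + m \quad\text{for all finite } F'\subseteq\NN,$$
where now the index $k$ ranges over the relisted sequence $(J_k)$.

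It remains to show that this condition is equivalent to condition (ii) of the corollary. I would set up the bookkeeping by writing the index set of $(J_k)$ as pairs $(j,\ell)$ with $j\in\NN$ and $1\le\ell\le n$, so that $J_{(j,\ell)}=I_j$. Given a finite set $F'$ of such pairs, let $F\subseteq\NN$ be its image under the first-coordinate projection. Because all copies of $I_j$ coincide, the union is unchanged, $\bigcup_{k\in F'} J_k=\bigcup_{j\in F} I_j$, while $|F'|\le n|F|$, with equality precisely when $F'$ contains all $n$ copies of each $j\in F$.

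The main (and essentially only) point requiring care is this extremal comparison. I would argue that the Proposition's condition can fail only through a ``saturated'' set $F'$: if it fails for some $F'$, then replacing $F'$ by $F\times\{1,\dots,n\}$ leaves the union unchanged but raises the cardinality to $n|F|$, so the inequality still fails. Hence failure of the Proposition's condition is equivalent to the existence of a finite $F\subseteq\NN$ with $n|F|\ge\left|\bigcup_{j\in F}I_j\right|+m$, which is exactly the negation of (ii). Taking contrapositives yields the desired equivalence. I do not anticipate any genuine obstacle here, since once $n\cdot Q$ is recast in the standard form the remaining argument is purely combinatorial.
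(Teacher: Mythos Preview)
Your proposal is correct and follows essentially the same approach as the paper: both recognize that $n\cdot Q$ is again of the form $(\ast)$ with each $I_j$ repeated $n$ times, apply Proposition~\ref{infiniteexactnumberoftrivialsubprojections}, and then observe that among all finite index sets $F'$ for the repeated sequence the extremal (``saturated'') choice $F'=F\times\{1,\dots,n\}$ governs the inequality, yielding the factor $n|F|$. The paper compresses this into two sentences, whereas you spell out the saturation argument explicitly; there is no substantive difference.
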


\begin{proof}
Note, that in $n\cdot Q$ each index set $I_j$ appears $n$ times. Choosing the same set $I_j$ several times does not increase the left-hand side of the inequality (ii) of Proposition \ref{infiniteexactnumberoftrivialsubprojections}, while it does increase the right-hand side of that inequality. Now the statement follows immediately from Proposition \ref{infiniteexactnumberoftrivialsubprojections}. 
\end{proof}

\section{Non-full multiplier projections}\label{sectionnonfull}

The combinatorial description of subequivalences makes it possible to prove the following useful result.

\begin{lemma}\label{Ngsmallerenough}
If $N\cdot g\preceq \bigoplus_{j=1}^\infty p_{I_j}$ for all $N\in\NN$, then
$$\mathbbm{1}\preceq Q.$$
\end{lemma}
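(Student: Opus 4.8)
The plan is to reduce the statement to a purely combinatorial assertion about the index sets $I_j$ and then solve that combinatorial problem. By Proposition \ref{infiniteexactnumberoftrivialsubprojections} (applied with $m=N$), the relation $N\cdot g\preceq Q$ is equivalent to the existence of a finite set $F\subseteq\NN$ with $|F|\geq\left|\bigcup_{j\in F}I_j\right|+N$. Writing $d(F):=|F|-\left|\bigcup_{j\in F}I_j\right|$, the hypothesis that $N\cdot g\preceq Q$ holds for every $N$ therefore says exactly that $\sup_F d(F)=\infty$, the supremum being taken over all finite $F\subseteq\NN$. On the other hand, since $C(X,\KK)$ is stable, the multiplier unit is an infinite direct sum of trivial $1$-dimensional projections, $\mathbbm{1}\sim\bigoplus_{k=1}^\infty g$. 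Hence it suffices to produce pairwise disjoint finite sets $F_1,F_2,\ldots\subseteq\NN$ such that each finite sub-sum $\bigoplus_{j\in F_k}p_{I_j}$ dominates $g$; for then disjointness makes $\bigoplus_{k}\bigoplus_{j\in F_k}p_{I_j}$ a subprojection of $Q$, and monotonicity of infinite direct sums under subequivalence yields $\mathbbm{1}\sim\bigoplus_k g\preceq\bigoplus_k\bigoplus_{j\in F_k}p_{I_j}\preceq Q$.

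By the argument in the proof of Proposition \ref{infiniteeisnotsubprojection}, the condition $d(F_k)\geq 1$, i.e. $\left|\bigcup_{j\in F_k}I_j\right|<|F_k|$, already guarantees $g\preceq\bigoplus_{j\in F_k}p_{I_j}$. So the whole proof comes down to the following combinatorial claim, which I expect to be the main point: if $\sup_F d(F)=\infty$, then there are infinitely many pairwise disjoint finite sets with $d\geq 1$.

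To prove the claim I would argue by contradiction. Suppose there were only finitely many pairwise disjoint finite sets of positive $d$, and fix a maximal such family $F_1,\ldots,F_r$; put $G_0:=F_1\cup\cdots\cup F_r$, a fixed finite set. Maximality forces $d(F')\leq 0$ for every finite $F'$ disjoint from $G_0$, since otherwise $F'$ could be adjoined to the family. Now for an arbitrary finite $F$, split it as $F=(F\cap G_0)\sqcup(F\setminus G_0)$ and use $\bigcup_{j\in F}I_j\supseteq\bigcup_{j\in F\setminus G_0}I_j$ to estimate
$$d(F)=|F\cap G_0|+|F\setminus G_0|-\left|\bigcup_{j\in F}I_j\right|\leq |F\cap G_0|+d(F\setminus G_0)\leq |F\cap G_0|\leq |G_0|.$$
Thus $d$ would be bounded by the constant $|G_0|$, contradicting $\sup_F d(F)=\infty$. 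This establishes the claim and hence the lemma.

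The one genuinely delicate point is the extraction step just described; the inequality above is what makes it work, and it rests on the elementary but crucial observation that enlarging the index set can only enlarge the union $\bigcup_{j}I_j$ (monotonicity of neighborhoods), so that the deficiency $d(F\setminus G_0)$ controls $d(F)$ up to the bounded correction $|F\cap G_0|\leq|G_0|$. The remaining ingredients are routine: that $g$ is dominated by each block with $d\geq 1$ is exactly the mechanism from the proof of Proposition \ref{infiniteeisnotsubprojection} (pull back to $\left|\bigcup_{j\in F_k}I_j\right|$ copies of $S^2$ and invoke \cite{Hu}, Theorem 8.1.2), while the facts that infinite direct sums respect subequivalence and that $\mathbbm{1}\sim\bigoplus_{k}g$ are standard for stable C*-algebras and are used repeatedly in \cite{R1}.
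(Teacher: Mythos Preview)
Your proof is correct and follows essentially the same route as the paper: both translate the hypothesis via Proposition \ref{infiniteexactnumberoftrivialsubprojections} into unboundedness of the defect $d(F)=|F|-\left|\bigcup_{j\in F}I_j\right|$, then extract infinitely many pairwise disjoint finite sets $F_k$ with $d(F_k)\geq 1$ and assemble the corresponding partial isometries into a multiplier witnessing $\mathbbm{1}\sim\infty\cdot g\preceq Q$. The only cosmetic difference is that the paper proves the extraction step constructively (given a finite $G$, choose $F$ with $d(F)\geq |G|+1$ and set $H=F\setminus G$), whereas you argue by contradiction via a maximal disjoint family; both hinge on the same inequality $d(F)\leq |F\cap G|+d(F\setminus G)$.
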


\begin{proof}
By Proposition \ref{infiniteexactnumberoftrivialsubprojections} the hypothesis is equivalent to:

For all $N\in\NN$ there is some finite subset $F\subseteq\NN$ such that
\begin{equation*} |F|\geq \left |\bigcup_{j\in F}I_j\right | +N.
\tag{$\ast \ast$}\end{equation*}
Let $G\subseteq \NN$ be any finite subset of the natural numbers. We claim that there is then some finite subset $H\subseteq (\NN\setminus G)$ such that  
$$g\preceq \bigoplus_{j\in H} p_{I_j}.$$

To show this, apply the hypothesis ($\ast \ast$) to the choice $|G|+1$ for $N$: we obtain a finite subset $F\subseteq\NN$ such that
$$|F|\geq \left |\bigcup_{j\in F }I_j\right | +|G|+1.$$
Then 
$$\left | \bigcup_{j\in F\setminus G}I_j\right |+1\leq \left | \bigcup_{j\in F}I_j\right |+1\leq |F|-|G|.$$
By Proposition \ref{infiniteeisnotsubprojection} this implies that
$$ g\preceq \bigoplus_{j\in F\setminus G} p_{I_j},$$
and we can take $H:=F\setminus G$.

Using this intermediate result we begin to iterate:\\
Firstly by assumption we have $g\preceq \bigoplus_{j=1}^\infty p_{I_j}$, and therefore by Lemma 4.4 of \cite{R1},
$$g\preceq \bigoplus_{j=1}^{m_1-1}p_{I_j}$$ for some $m_1\in\NN$. But then with $G=\{1,2,\ldots, (m_1-1)\}$ we can find, by application of the proven claim, some natural number $m_2>m_1$ and $H\subseteq \{m_1,(m_1+1),\ldots,(m_2-1)\}$ such that 
$$g\preceq \bigoplus_{j=m_1}^{m_2-1}p_{I_j}.$$
Iterating, we get a strictly increasing sequence of natural numbers $1=m_0<m_1<m_2<m_3<\ldots $ and, for all $i\in\NN$, we get a partial isometry $v_i\in C(\prod_{j=1}^\infty S^2,\KK)$ such that 
$$g=v_i^*v_i\sim v_i v_i^*\leq  \bigoplus_{j=m_{i-1}}^{m_i-1}p_{I_j}.$$
The multiplier 
$$V=\bigoplus_{i=1}^\infty v_i$$
then implements the subequivalence
$$\mathbbm{1}\sim \infty\cdot g\preceq \bigoplus_{j=1}^\infty p_{I_j}=Q.$$
\end{proof}

\noindent We can now prove the main theorem of this section, which is a combinatorial characterization for multiplier projections of the special form to be non-full.

\begin{theorem}\label{nonfull}
Let $Q=\bigoplus_{j=1}^{\infty}p_{I_j}\in \mathcal{M}(C(\prod_{j=1}^\infty S^2,\KK))$ be as above. Then the following statements are equivalent:
\begin{itemize}
\item[(i)] Q is non-full.
\item[(ii)] $\forall m\in\NN\ \exists N(m)\in \NN\ such\ that \ N(m)\cdot g\npreceq m\cdot Q$.
\item[(iii)] $\forall m\in\NN\ \exists N(m)\in \NN\ such\ that \ m|F| <|\bigcup_{j\in F}I_j|+N(m) \mbox{ for all finite subsets }F\subseteq \NN$.
\end{itemize}
\end{theorem}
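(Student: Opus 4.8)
The plan is to establish the three equivalences by proving the cycle $(i) \Rightarrow (ii) \Rightarrow (iii) \Rightarrow (i)$, with the combinatorial translation $(ii) \Leftrightarrow (iii)$ being essentially free from the technical machinery already developed.

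\textbf{Starting with $(ii) \Leftrightarrow (iii)$.} This step should be immediate from Corollary \ref{exactnumberoftrivialsubprojectionsmultiple}. Indeed, applying that corollary with $n=m$ and the multiple $N(m)$ playing the role of the $m$ in the corollary, the statement $N(m) \cdot g \npreceq m \cdot Q$ is equivalent to $m|F| < |\bigcup_{j \in F} I_j| + N(m)$ for all finite $F \subseteq \NN$. Quantifying over $m$ on both sides gives the equivalence directly, so I would dispatch this with a single sentence invoking the corollary.

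\textbf{The implication $(i) \Rightarrow (ii)$, by contraposition.} I would assume the negation of $(ii)$: there exists some $m \in \NN$ such that for all $N \in \NN$ we have $N \cdot g \preceq m \cdot Q$. The goal is to show $Q$ is full. The idea is to apply Lemma \ref{Ngsmallerenough} to the projection $m \cdot Q$. Note that $m \cdot Q = \bigoplus_{j=1}^\infty m \cdot p_{I_j}$, and each $m \cdot p_{I_j}$ is itself a finite direct sum of tensor products of Bott projections (here I should check that the lemma's hypotheses genuinely apply to this reindexed direct sum, which is the main place care is needed). The lemma then yields $\mathbbm{1} \preceq m \cdot Q$. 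Since the unit is subequivalent to a multiple of $Q$, the closed two-sided ideal generated by $Q$ contains $\mathbbm{1}$, hence is everything, so $Q$ is full.

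\textbf{The implication $(iii) \Rightarrow (i)$, or equivalently $\neg(i) \Rightarrow \neg(iii)$.} Here I assume $Q$ is full and derive the negation of $(iii)$, namely that there is some $m$ with $N \cdot g \preceq m \cdot Q$ for every $N$. Fullness of the multiplier projection $Q$ means the ideal it generates is all of $\mathcal{M}(\CZK)$; by the cited fact (\cite{RLL}, Exercise 4.8) some multiple of $Q$ is equivalent to the identity, say $\mathbbm{1} \sim m \cdot Q$ for some fixed $m$. Since $\mathbbm{1} \sim \infty \cdot g$ dominates $N \cdot g$ for every $N$, we obtain $N \cdot g \preceq \mathbbm{1} \sim m \cdot Q$ for all $N$, which is precisely the negation of $(ii)$, hence of $(iii)$ by the first step.

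\textbf{Main obstacle.} I expect the delicate point to be the careful bookkeeping in $(i)\Rightarrow(ii)$: ensuring that the reindexed direct summands of $m \cdot Q$ are presented in the exact form $\bigoplus p_{I_j}$ required by Lemma \ref{Ngsmallerenough}, and confirming that subequivalence of every $N \cdot g$ into $m \cdot Q$ is literally the hypothesis the lemma consumes. The passage from $\mathbbm{1} \preceq m \cdot Q$ to fullness of $Q$ also warrants a clean justification, tying together the ideal generated by $Q$ with that generated by its multiple; this is routine but should be stated explicitly rather than left implicit.
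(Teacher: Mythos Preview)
Your proposal is correct and follows essentially the same route as the paper: the equivalence $(ii)\Leftrightarrow(iii)$ via Corollary~\ref{exactnumberoftrivialsubprojectionsmultiple}, the implication $(i)\Rightarrow(ii)$ by contraposition using Lemma~\ref{Ngsmallerenough} applied to the reindexed sum $m\cdot Q$, and the remaining implication via \cite{RLL}, Exercise~4.8. One small over-claim: that exercise gives only $\mathbbm{1}\preceq m\cdot Q$ for some $m$, not $\mathbbm{1}\sim m\cdot Q$, but this weaker statement is all your argument actually uses, so no harm is done.
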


\begin{proof}
The equivalence between (ii) and (iii) follows from Proposition \ref{exactnumberoftrivialsubprojectionsmultiple}.

If we are in the situation of the condition (ii), then, in particular, $\mathbbm{1}\npreceq m\cdot Q$ for any natural number $m$ and so $Q$ cannot be full (see \cite{RLL}, Exercise 4.8). This proves that (ii) implies (i).

Finally assume that there exists some $m\in\NN$ such that for all $N\in\NN$ we have $N\cdot g\preceq m\cdot Q$. Then by Lemma \ref{Ngsmallerenough} also $\mathbbm{1}\preceq m\cdot Q$ and $Q$ is full. So (i) implies (ii).
\end{proof}

\noindent Rephrasing the content of Theorem \ref{nonfull} we get the following interesting result.

\begin{corollary}\label{nonfullprojectionexample}
There exists a compact Hausdorff space $X$ and a projection $Q$ in $C_{*\mbox{-}s}(X,\BH)$, the multiplier algebra of $C(X,\KK)$, such that $Q(x)\in \BH\setminus\KK$ for all $x\in X$, and $Q$ is not full in $C_{*\mbox{-}s}(X,\BH)$.
\end{corollary}

In particular, the projection $Q(x)$ is full in the fiber over each $x\in X$, but $Q$ is itself non-full. It follows from the results of Pimsner, Popa and Voiculescu in \cite{PPV} that for obtaining such an example the space $X$ is necessarily of infinite dimension. 

\begin{proof}
Let $X=\prod_{j=1}^\infty S^2$. To show existence of the projection $Q$, choose pairwise disjoint subsets $I_j\subseteq\NN$ such that $|I_j|=n$ and set 
$$Q:=\bigoplus_{j=1}^{\infty}p_{I_j} \in \mathcal{M}(C(X,\KK)).$$
We then have that $Q(x)\in\BH\setminus \KK$, since $\norm{p_j(x)}=1$ for all $x\in X$ and all $j\in \NN$ (and since a strictly convergent sum of pairwise orthogonal elements in the compact operators $\KK$ belongs to $\KK$ if, and only if, the elements converge to $0$ in norm (cf. \cite{R1} Proof of Proposition 5.2)). So we only need to show that the index sets $I_j$ satisfy the condition (iii) of Theorem \ref{nonfull}; that is, we need to show that
$$\forall m\in\NN\ \exists N(m)\in \NN\mbox{ such that } m<\frac{|\bigcup_{j\in F}I_j|+N(m)}{|F|} \mbox{ for all finite subsets }F\subseteq \NN.$$
Now
$$\frac{\left | \bigcup_{j\in F }I_j\right |+\frac{m(m-1)}{2}}{|F|}\geq \frac{ \sum_{j=1 }^{|F|}j +\frac{m(m-1)}{2}}{|F|}$$
$$=\frac{ \frac{|F|(|F|+1)}{2} +\frac{m(m-1)}{2}}{|F|}$$
$$=\frac{1}{2}\left ( 1+ |F| + \frac{m(m-1)}{|F|}\right )$$
and the last expression is minimized when $|F|\in\{(m-1),m\}$.

Hence,
$$\frac{\left | \bigcup_{j\in F }I_j\right |+\frac{m(m-1)}{2}+1 }{|F|}> \frac{\left | \bigcup_{j=1 }^{m-1}I_j \right |+\frac{m(m-1)}{2}}{m-1}$$
$$= \frac{\frac{m(m-1)}{2}+ \frac{m(m-1)}{2}}{m-1}=m.$$
So we can choose $N(m)=\frac{m(m-1)}{2}+1.$
\end{proof}

\section{Stably finite multiplier projections}\label{sectionstablyfinite}

In this section we will show that every multiple of a non-full projection
$$Q=\bigoplus_{j=1}^\infty p_{I_j}$$
constructed as in Theorem \ref{nonfull} above (and, in particular, every multiple of the explicit projection of Corollary \ref{nonfullprojectionexample}), is a finite projection. In fact, our results show that a multiplier projection $Q$ of the special form is non-full if, and only if, it is stably finite (Corollary \ref{nonfullequalsstablyfinite}).\\

\noindent It is fairly easy to see that the projections $m\cdot Q$, where $Q$ is one of the non-full projections from Theorem \ref{nonfull}, cannot be properly infinite. This follows from the following lemma, together with the existence (Theorem \ref{nonfull}) of a number $N(m)\in\NN$ such that $$N(m)\cdot g\npreceq m\cdot Q,\mbox{ but } N(m) \cdot g\preceq  l\cdot Q\mbox{ for sufficiently large $l$}.$$
 
\begin{lemma}\label{notproperlyinfinite}
Let $A$ be a C*-algebra and $p$ and $q$ two projections in $A \otimes \KK$ such that $p\preceq k\cdot q$, but $p \npreceq m\cdot q$ for some $m<k$. Then $q$ is not properly infinite.
\end{lemma}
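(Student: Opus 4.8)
The plan is to prove the contrapositive: assuming $q$ is properly infinite, I will deduce $p\preceq m\cdot q$, contradicting the hypothesis.

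I would first recall the standard characterization (see \cite{RLL}) that a properly infinite projection $q$ satisfies $q\neq 0$ and $2\cdot q = q\oplus q \preceq q$. Since everything takes place in the stable algebra $A\otimes\KK$, the finite direct sums $n\cdot q$ are genuine projections, and Murray--von Neumann subequivalence is transitive and additive with respect to $\oplus$. A one-line induction then upgrades this to the \emph{collapse} $n\cdot q\preceq q$ for every $n\in\NN$: if $n\cdot q\preceq q$, then
$$(n+1)\cdot q = q\oplus(n\cdot q)\preceq q\oplus q = 2\cdot q\preceq q.$$

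With this in hand the lemma is immediate. In particular $k\cdot q\preceq q$, while $q\preceq m\cdot q$ since $q$ is (for $m\geq 1$) a direct summand of $m\cdot q$. Combining these with the hypothesis $p\preceq k\cdot q$ and using transitivity gives
$$p\preceq k\cdot q\preceq q\preceq m\cdot q,$$
so $p\preceq m\cdot q$, contradicting $p\npreceq m\cdot q$. Hence $q$ cannot be properly infinite.

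The argument is entirely formal, and I expect no substantial obstacle: the only points demanding care are invoking the correct characterization of proper infiniteness and observing that the collapse $n\cdot q\preceq q$ holds for \emph{all} $n$, which is precisely what forces the subequivalence hierarchy relating $m\cdot q$ and $k\cdot q$ to degenerate. Note that the hypothesis $m<k$ is not actually needed for the contradiction itself; it serves only to guarantee that the two hypotheses on $p$ can hold simultaneously (for a finite $q$), making the statement non-vacuous.
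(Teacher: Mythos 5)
Your proof is correct and is essentially the same as the paper's: both rest on the characterization $q\oplus q\preceq q$ of proper infiniteness and collapse multiples of $q$ to contradict $p\npreceq m\cdot q$. The only (inessential) difference is bookkeeping — you collapse $k\cdot q$ all the way down to $q$ and then pass back up to $m\cdot q$ via $q\preceq m\cdot q$, whereas the paper descends one summand at a time, $k\cdot q\preceq (k-1)\cdot q\preceq\ldots\preceq m\cdot q$; your closing remark about $m<k$ only ensuring non-vacuity is also accurate.
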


\begin{proof}
Assume that $q$ is properly infinite. Then
$$p \preceq k\cdot q=\underbrace{(q\oplus q)\oplus q\oplus \ldots \oplus q}_{k\ many}$$
$$\preceq \underbrace{(q)\oplus q\oplus \ldots \oplus q}_{(k-1)\ many}= (k-1)\cdot q$$
$$\preceq (k-2)\cdot q \preceq\ldots\preceq m\cdot q,$$
in contradiction with the assumption.
\end{proof}

\noindent It does not seem possible to see finiteness of these projections in a similarily easy way. To show finiteness we will need to give a somewhat complicated proof). The idea is the content of the following lemma and is essentially contained in the proof of Theorem 5.6 of \cite{R1}.

\begin{lemma}\label{finitenessargument}
Let $B$ be a simple inductive limit C*-algebra,
$$\xymatrix{B_1 \ar[r]^{\varphi_1}\ar@/_1.5pc/[rrr]_{\varphi_{i,1}}  & B_2 \ar[r]^{\varphi_2}&\ldots \ar[r] &B_i \ar[r]^{\varphi_i} & \ldots \ar[r] & B }$$
with injective connecting $^*$-homomorphisms $\varphi_j$. Let $q$ be a projection in $B_1$.

If the image $\varphi_{i,1}(q)$ of the projection $q$ is not properly infinite in any building block algebra $B_i$, then $q$ must be finite.
\end{lemma}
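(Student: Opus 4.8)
The plan is to prove the contrapositive: assume $q$ is infinite in $B$ and show that some image $\varphi_{i,1}(q)$ must be properly infinite in the corresponding building block $B_i$. Recall that $q$ being infinite in $B$ means there is a proper subprojection $q' < q$ with $q' \sim q$; equivalently, there is a partial isometry $v \in B$ with $v^*v = q$ and $vv^* = q' \lneqq q$. So there is a nonzero projection $e \le q$ with $e \perp q'$ and $q \sim q' = q - e$. The key structural fact I want to exploit is that in a simple C*-algebra, any two nonzero projections are suitably comparable after amplification, together with the inductive-limit approximation that lets me push all the relevant relations down into a single building block.

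\textbf{Main steps.} First I would fix the data witnessing infiniteness: a partial isometry $v$ and a nonzero projection $e \le q$ as above. Next, I would use simplicity of $B$. Since $B$ is simple and $e \ne 0$, the projection $q$ (indeed the unit-or-approximate-unit neighborhood of $q$) is subequivalent to a multiple of $e$: there is an $n \in \NN$ with $q \preceq n \cdot e$. This is the step that upgrades ``$e$ is a nonzero summand that $q$ dominates'' into ``$e$ is large enough to absorb a copy of $q$,'' which is exactly what properly infinite requires. Combining $q \sim q - e$ (so $q$ absorbs one copy of $e$ while staying equivalent to itself) with $q \preceq n\cdot e$ should let me manufacture, inside $B$, a relation of the form $q \oplus q \preceq q$, i.e. witness that $q$ is properly infinite in $B$ itself. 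The cleanest route is: iterating $q \sim q - e$ produces arbitrarily many mutually orthogonal copies of $e$ under $q$, hence $q \succeq n \cdot e \succeq q$, and from $q \oplus e \cdots$ one extracts $2 \cdot q \preceq q$.

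\textbf{Descending to a building block.} Once I have a finite collection of partial isometries and projections in $B$ implementing $2\cdot q \preceq q$ (equivalently $q$ properly infinite in $B$), I invoke the standard inductive-limit fact that such a finite relation among projections and partial isometries, all of which can be approximated to within $\varepsilon$ by elements of some $\varphi_{i,\infty}(B_i)$, can be perturbed — using that close projections are Murray--von Neumann equivalent and close partial isometries can be corrected — to hold exactly in the image of a single $B_i$ for $i$ large. Since the connecting maps $\varphi_{i,1}$ are injective $^*$-homomorphisms and $q$ originates in $B_1$, this shows $\varphi_{i,1}(q)$ is properly infinite in $B_i$, contradicting the hypothesis and completing the contrapositive.

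\textbf{Expected main obstacle.} The delicate point is the descent step: guaranteeing that a proper-infiniteness relation holding in the limit $B$ actually descends exactly to a finite stage $B_i$. Proper infiniteness of $q$ means $q \oplus q \preceq q$, which can be encoded by finitely many elements satisfying finitely many polynomial $^*$-relations (isometry and orthogonality conditions); a standard $\varepsilon$-perturbation argument then moves these into $\varphi_{i,\infty}(B_i)$ and, via injectivity of the connecting maps, back to a genuine relation in $B_i$. Making the approximation estimates precise — in particular ensuring the perturbed elements remain genuine projections and partial isometries (not merely approximate ones) and dominate/are dominated correctly — is where the real work lies, and it is essentially the argument in the proof of Theorem 5.6 of \cite{R1} that the lemma is abstracting.
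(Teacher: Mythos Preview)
Your approach is correct and is essentially the same as the paper's, just run in contrapositive order with the two cited results unpacked: the paper invokes Cuntz's theorem (in a simple C*-algebra every infinite projection is properly infinite) for your ``Main steps'' paragraph, and invokes Proposition~2.3 of \cite{R1} for your descent step, then uses injectivity of the connecting maps to pull finiteness back to $B_1$. The only cosmetic difference is that the paper cites these two facts directly rather than sketching their proofs, and the relevant reference for the descent is Proposition~2.3 (not Theorem~5.6) of \cite{R1}.
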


\begin{proof}
The hypothesis that $\varphi_{i,1}(q)\in B_i$ is not properly infinite for any $i\in\NN$ together with  Proposition 2.3 of \cite{R1} applied to the inductive sequence 
$$\xymatrix{qB_1q \ar[r]& \varphi_{2,1}(q)B_2\varphi_{2,1}(q) \ar[r]&\ldots \ar[r] &\varphi_{i,1}(q)B_i \varphi_{i,1}(q)\ar[r] & \ldots }$$
implies that the image of $q$  in the inductive limit algebra $B$ cannot be properly infinite either. Now $B$ is a simple C*-algebra, in which by a result of Cuntz in \cite{C}, every infinite projection is properly infinite. Hence, the image of $q$ in $B$ is finite.

Now, injectivity of the connecting maps $\varphi_j$ implies that $q$ must be finite, too. 
\end{proof}

\noindent We can now prove the main result.

\begin{theorem}\label{finitemultiplierproj}
Let
$$Q=\bigoplus_{j=1}^\infty p_{I_j}\ \in \mathcal{M} (C (\prod_{j=1}^\infty S^2,\KK) )$$
be a multiplier projection as before. Suppose there is some $k\in\NN$ such that $k\cdot g\npreceq Q$.

Then $Q$ is finite.
\end{theorem}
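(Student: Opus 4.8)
The plan is to derive finiteness of $Q$ from Lemma \ref{finitenessargument}. That lemma reduces finiteness of a projection sitting at the bottom of a simple inductive limit with injective connecting maps to the much weaker requirement that the image of the projection in each building block not be properly infinite. So I would first place $Q$ as a projection $q=Q$ in the bottom algebra $B_1$ of such a system, following the inductive-limit construction behind Theorem 5.6 of \cite{R1}, and then use the combinatorial comparison results of Section \ref{technicalresult} to rule out proper infiniteness of every image $\varphi_{i,1}(Q)$.

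First I would build an inductive system $B_1\xrightarrow{\varphi_1}B_2\xrightarrow{\varphi_2}\cdots$ with injective connecting $^*$-homomorphisms and simple limit $B=\varinjlim B_i$, with $Q\in B_1$. Concretely the building blocks are (corners of) multiplier algebras $\mathcal{M}(C(Y_i,\KK))$ over countable products of two-spheres $Y_i$, with $Y_1=\prod_{j=1}^\infty S^2$, and the connecting maps are of Villadsen type: a diagonal sum of pullbacks along coordinate projections $Y_{i+1}\to Y_i$ together with finitely many point evaluations at points chosen from a dense subset. The point evaluations force simplicity of the limit, injectivity is automatic from the presence of the pullback components, and --- the feature that makes the combinatorics usable --- such a map sends a diagonal Bott projection $\bigoplus_j p_{I_j}$ to a projection of the same diagonal form in which only finitely many Bott summands have been replaced by trivial one-dimensional summands $g$ (namely the images $p_{I_j}(z)$ of the Bott summands under the point evaluations $z$).

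Granting such a system, Lemma \ref{finitenessargument} reduces the theorem to showing that $\varphi_{i,1}(Q)$ is not properly infinite in $B_i$ for every $i$, and here I would combine Lemma \ref{notproperlyinfinite} with Corollary \ref{exactnumberoftrivialsubprojectionsmultiple}. By the previous paragraph each $\varphi_{i,1}(Q)$ is again a diagonal Bott projection whose index family differs from $(I_j)_j$ only by finitely many empty sets (the trivial summands), so the hypothesis $k\cdot g\npreceq Q$, which by Proposition \ref{infiniteexactnumberoftrivialsubprojections} reads $\sup_F(|F|-|\bigcup_{j\in F}I_j|)\le k-1$, passes to a finite bound for the index sets $I_j'$ describing $\varphi_{i,1}(Q)$. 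Writing $s(n):=\sup_F(n|F|-|\bigcup_{j\in F}I_j'|)$, so that $s(1)<\infty$, Corollary \ref{exactnumberoftrivialsubprojectionsmultiple} says precisely that $m\cdot g\preceq n\cdot\varphi_{i,1}(Q)$ holds if and only if $s(n)\ge m$. The function $s$ is nondecreasing, integer-valued and unbounded (already $s(n)\ge n-|I_{j_0}'|$ from a singleton $F$), while $s(1)$ is finite; hence there is a smallest $n_0\ge 2$ with $s(n_0)>s(1)$, and for $m:=s(1)+1$ we obtain $m\cdot g\preceq n_0\cdot\varphi_{i,1}(Q)$ but $m\cdot g\npreceq(n_0-1)\cdot\varphi_{i,1}(Q)$. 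Lemma \ref{notproperlyinfinite} then shows that $\varphi_{i,1}(Q)$ is not properly infinite, and Lemma \ref{finitenessargument} yields that $Q$ is finite.

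The main obstacle is the construction of the first step: one must produce a simple inductive limit with injective connecting maps that at once places $Q$ at the bottom (so that the conclusion of Lemma \ref{finitenessargument} is about $Q$ itself), keeps every image in the analysable diagonal Bott form, and adds only finitely many trivial summands up to each finite stage, so that the deficiency $s(1)$ stays finite at every $B_i$ (it will in general tend to infinity along the system, but only finiteness at each fixed stage is needed). Verifying simplicity of the Villadsen-type limit and checking that the combinatorial deficiency is controlled under the connecting maps is the technical core; the reduction and the counting argument above are then routine.
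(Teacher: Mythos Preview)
Your overall strategy --- embed $Q$ at the bottom of a simple inductive limit with injective connecting maps, then combine Lemma~\ref{notproperlyinfinite} with Lemma~\ref{finitenessargument} --- is exactly the paper's plan. The gap is in the construction of the connecting map, and it is fatal for the version you describe.

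You propose a Villadsen-type map: a diagonal of coordinate-projection pullbacks together with finitely many \emph{full} point evaluations $\mathrm{ev}_z$. But $Q=\bigoplus_j p_{I_j}$ is already an infinite direct sum, so a single point evaluation produces $\mathrm{ev}_z(Q)=\bigoplus_j p_{I_j}(z)\sim\infty\cdot g\sim\mathbbm{1}$. Hence $\varphi(Q)\succeq\mathbbm{1}$ is properly infinite already at the second stage, and your claim that ``only finitely many Bott summands have been replaced by trivial one-dimensional summands'' is false: infinitely many trivial summands appear, your deficiency $s(1)$ becomes $+\infty$, and Lemma~\ref{notproperlyinfinite} gives nothing. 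The density-of-evaluation-points mechanism that forces simplicity is precisely what destroys the finiteness control if implemented naively.

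The paper resolves this tension by a genuinely different connecting map. First it arranges (via an injective coordinate-doubling pullback) that $\NN\setminus\bigcup_j I_j$ is infinite, leaving room for fresh coordinates. Each component $\varphi_j$ is then a \emph{partial} point evaluation on only the first $j$ coordinates, followed by tensoring with an additional Bott projection $p_{\tilde B_j}$ over $k+j$ fresh coordinates (the $k$ coming from the hypothesis $k\cdot g\npreceq Q$). The extra Bott factors exactly compensate for the coordinates killed by the partial evaluations, so that $\varphi^m(Q)$ is again a pure Bott diagonal $\bigoplus_{I\in\Gamma_m}p_I$ with no trivial summands at all. The verification that in fact $g\npreceq\varphi^m(Q)$ (i.e.\ $s(1)\le 0$, stronger than $s(1)<\infty$) is a Hall's marriage theorem argument producing injective selectors $t_m:\Gamma_m\to\NN$ with $t_m(I)\in I$; this is where the choice $|B_j|=k$ is used. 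Once that is in place, your counting argument with Lemma~\ref{notproperlyinfinite} (or the paper's variant, using that $g$ lies in the ideal generated by $\varphi^m(Q)$) finishes the proof via Lemma~\ref{finitenessargument}.
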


\begin{proof}
First we reduce to the case that $\NN\setminus\bigcup_{j=1}^\infty I_j$ is infinite:\\

\noindent Consider the projection map
$\rho:\prod_{j=1}^\infty S^2 \rightarrow \prod_{j=1}^\infty S^2$ onto the odd coordinates:
$$\rho(x_1,x_2,x_3,x_4,x_5,\ldots )=(x_1,x_3,x_5,\ldots).$$
Then the induced mapping $\rho^*:C(\prod_{j=1}^\infty S^2,\KK)\rightarrow C(\prod_{j=1}^\infty S^2,\KK)$ given by
$$\rho^*(f)= f\circ \rho$$
is injective and extends to an injective mapping between the multiplier algebras
$$\rho^*:\mathcal{M}\left (C (\prod_{j=1}^\infty S^2,\KK)\right)\rightarrow \mathcal{M}\left (C (\prod_{j=1}^\infty S^2,\KK )\right)$$
(to see this consult \cite{L} Proposition 2.5 and use that $\rho^*(n\cdot g)\stackrel{n\rightarrow \infty}{\rightarrow}\mathbbm{1}$, where $g$ denotes a constant one-dimensional projection as before).

Now $Q$ must be finite in $\mathcal{M}(C(\prod_{j=1}^\infty S^2,\KK))$, if $\rho^*(Q)$ is. Indeed, on supposing $Q$ to be infinite, i.e. $Q\sim Q_0<Q$ for some projection $Q_0$,  injectivity of $\rho^*$ implies $\rho^*(Q-Q_0)>0$ and hence infiniteness of $\rho^*(Q)$. 

But now $\rho^*(Q)$ is of the same form as $Q$, i.e.,
$$\rho^*(Q)=\bigoplus_{j=1}^\infty p_{\tilde{I}_j},$$
and the sets $\tilde{I}_j$ of indices being used satisfy $\NN\setminus\bigcup_{j=1}^\infty \tilde{I}_j \supseteq 2\NN$, and in particular $\NN\setminus\bigcup_{j=1}^\infty \tilde{I}_j$ is infinite, as desired.\\

\noindent After this reduction step we start the main part of the proof. By assumption we can find $k\in\NN\cup\{0\}$ such that $k\cdot g\preceq Q$, but $(k+1)\cdot g\npreceq Q$. Choose a partition $\{ A_i   \}_{i=-1}^\infty$ of $\NN$ such that each $A_i$ is infinite and such that $A_0=\bigcup_{j=1}^\infty I_j$, i.e., $A_0$ contains exactly all the indices used in our multiplier projection $Q$. Also, choose a partition $\{ B_i   \}_{i=-\infty}^\infty$ of $A_{-1}$ with each $B_i$ of cardinality $k$, except in the the case $k=0$ where we do not need the sets $B_i$ at all.

For each $r\geq 0$, choose an injective map
$$\gamma_r:\ZZ\times A_r\rightarrow A_{r+1}.$$
We can now define an injective map $\nu:\ZZ\times \NN\rightarrow \NN$, by
$$\nu(j,l)=\gamma_r(j,l)\mbox{, for every }l\in A_r.$$
Injectivity of $\nu$ follows from injectivity of each $\gamma_r$ and disjointness of the sets $A_j$.\\

\noindent Using the injective map $\nu$, let us now define a $^*$-homomorphism $$\varphi:\mathcal{M}(C(\prod_{j=1}^\infty S^2,\KK))\rightarrow \mathcal{M}(C(\prod_{j=1}^\infty S^2,\KK)).$$

The construction of this $^*$-homomorphism is only a small variation of a mapping that M. R\o rdam defined in his paper \cite{R1} to construct ``A simple C*-algebra with a finite and an infinite projection". $\varphi$ will depend on the natural number $k$ from the hypothesis of the theorem. But the change of $\varphi$ for varying $k$ is minor, so we can take care of all cases at once. (Only the case $k=0$ has to be treated separately, but this is actually exactly R\o rdam's map from \cite{R1}.)

For $j\leq 0$ and in the case $k\geq 1$ we define $\varphi_j:C(\prod_{j=1}^\infty S^2,\KK)\rightarrow C(\prod_{j=1}^\infty S^2,\KK)$ by
$$\varphi_j(f)(x_1,x_2,x_3,\ldots)=\tau(f(x_{\nu(j,1)},x_{\nu(j,2)},x_{\nu(j,3)},x_{\nu(j,4)},\ldots)\otimes p_{B_j})$$
with the finite sets $B_j\subseteq\NN$ chosen above, and a chosen isomorphism $\tau:\KK\otimes\KK\rightarrow\KK$. In the case $k=0$ we simply define $\varphi_j$ by 
$$\varphi_j(f)(x_1,x_2,x_3,\ldots)=f(x_{\nu(j,1)},x_{\nu(j,2)},x_{\nu(j,3)},x_{\nu(j,4)},\ldots).$$

For $j\geq 1$ we define $\varphi_j:C(\prod_{j=1}^\infty S^2,\KK)\rightarrow C(\prod_{j=1}^\infty S^2,\KK)$ by
$$\varphi_j(f)(x_1,x_2,x_3,\ldots)=\tau(f(c_{j,1},c_{j,2},\ldots,c_{j,j},x_{\nu(j,j+1)},x_{\nu(j,j+2)},\ldots)\otimes p_{B_j\cup\{\nu(j,1),\nu(j,2),\ldots,\nu(j,j)\}})$$
with points 
$$\begin{array}{ccccc}
c_{1,1}\\
c_{2,1} & c_{2,2}\\
c_{3,1}&c_{3,2}&c_{3,3}\\
c_{4,1}&c_{4,2}&c_{4,3}&c_{4,4}\\
\vdots & \vdots& \vdots & \vdots & \ddots\\
 \end{array}$$
in $S^2$ chosen in such a  way that for all $j\in\NN$,
$$\{(c_{k,1},c_{k,2},\ldots,c_{k,j})\ | \ k\geq j\}\mbox{ is dense in }\prod_{i=1}^j S^2.$$
(Here the case $k=0$ just means that every set $B_j$ is taken to be the empty set.)

After choosing a sequence of isometries $\{S_j\}_{j=-\infty}^\infty$ in $\mathcal{M}(C(\prod_{j=1}^\infty S^2,\KK))$ such that
$$S_j^*S_j=\mathbbm{1}\mbox{ for all $j\in\ZZ$ and }\sum_{j=-\infty}^\infty S_jS_j^*=\mathbbm{1},$$
define $\tilde{\varphi}:C(\prod_{j=1}^\infty S^2,\KK)\rightarrow \mathcal{M}(C(\prod_{j=1}^\infty S^2,\KK))$ by 
$$\tilde{\varphi}:=\sum_{j=-\infty}^\infty S_j \varphi_j S_j^*.$$
Then by Proposition \ref{exactnumberoftrivialsubprojectionsmultiple}, recalling that the cardinality of each set $B_j$ was chosen to be equal to $k$, and by the fact that $\varphi_j(g)\sim p_{B_j}$ for all $j\leq 0$, we get 
$$\tilde{\varphi}\left ((k+1)\cdot g\right )\succeq \bigoplus_{j=-\infty}^0 (k+1)\cdot p_{B_j}\succeq \bigoplus_{j=-\infty}^0g\sim \mathbbm{1}.$$
Hence $\tilde{\varphi}(n\cdot g)$ converges strictly for $n\rightarrow \infty$ to a projection $$F\sim\bigoplus_{j=-\infty}^\infty F_j\succeq \mathbbm{1},$$
where 
$$F_j=\left \{ \begin{array}{ll}\bar{\tau}(\mathbbm{1}\otimes p_{B_j})&  ,\mbox{ for }j\leq 0\mbox{ and } k\geq 1\\
 \mathbbm{1}&  ,\mbox{ for }j\leq 0 \mbox{ and } k= 0\\
\bar{\tau}\left ( \mathbbm{1}\otimes p_{B_j\cup\{\nu(j,1),\nu(j,2),\ldots,\nu(j,j)\}}\right ) &  ,\mbox{ for }j\geq 1 \mbox{ and } k\geq 1\\
\bar{\tau}\left ( \mathbbm{1}\otimes p_{\{\nu(j,1),\nu(j,2),\ldots,\nu(j,j)\}}\right ) &  ,\mbox{ for }j\geq 1 \mbox{ and } k=0.\\
\end{array}\right.$$
Here the map $\bar{\tau}:\BH\otimes \BH \rightarrow \BH$ is the extension of $\tau$ to $\BH\otimes\BH$, which exists because $\tau(e_n\otimes e_n)\stackrel{n\rightarrow \infty}{\longrightarrow} \mathbbm{1}$ strictly (\cite{L}).

Since $F\succeq \mathbbm{1}$, by Lemma 4.3 of \cite{R1} $F\sim \mathbbm{1}$ and hence there is an isometry $V\in\mathcal{M}(C(\prod_{j=1}^\infty  S^2,\KK))$ such that the map 
$$\varphi:=V^*\tilde{\varphi}V$$
extends to a unital mapping $\varphi:\mathcal{M}(C(\prod_{j=1}^\infty  S^2,\KK))\rightarrow \mathcal{M}(C(\prod_{j=1}^\infty  S^2,\KK))$. (Here we are using \cite{L} again.)

For every $0\neq f$ there is some $\delta >0$ and some open set $$\begin{array}{cccc} U&=& U_1\times U_2\times U_3\times \ldots & \times U_r\times S^2\times S^2\times \ldots \\ & \subseteq & S^2\times S^2\times S^2\times \ldots & \times S^2\times S^2\times S^2\times  \ldots\end{array}$$ such that $\|f_{|U}\|\geq \delta$. By the density condition on the $c_{ij}$ there are infinitely many $j\geq 0$ such that for any $x\in\prod_{j=1}^\infty S^2$, 
$$\|\varphi_j(f)(x)\|\geq \delta>0.$$ 
Hence $\varphi(f)(x)\in\BH\setminus \KK$ for all $x$ and $\varphi(f)\in\mathcal{M}(C(\prod_{j=1}^\infty  S^2,\KK))\setminus C(\prod_{j=1}^\infty  S^2,\KK)$.

 In particular, $\varphi$ is injective, and $C(\prod_{j=1}^\infty  S^2,\KK)\varphi(f)C(\prod_{j=1}^\infty  S^2,\KK)$ is norm dense in $C(\prod_{j=1}^\infty  S^2,\KK)$. (The latter holds since $\varphi(f)(x)\neq 0$ for all $x\in\prod_{j=1}^\infty S^2$.)

We get that $\left (k+1\right )\cdot g$ is an element in $C(\prod_{j=1}^\infty  S^2,\KK)\varphi(f)C(\prod_{j=1}^\infty  S^2,\KK)$. Further, $\varphi(\left (k+1\right )\cdot g)\succeq \mathbbm{1}$, and so $\varphi^2(f)$ is full in $\mathcal{M}(C(\prod_{j=1}^\infty  S^2,\KK))$.


This implies the simplicity of the inductive limit
$$B:=\lim_{\rightarrow}\left (\mathcal{M}\left (C(\prod_{j=1}^\infty  S^2,\KK)\right ),\varphi\right ).$$
\\
\noindent We have now arrived in the setting of Lemma \ref{finitenessargument} and it suffices to show that $\varphi^m(Q)$ is not properly infinite for all $m\in\NN$. For this we define maps
$$\alpha_j:\mathcal{P}(\NN)\rightarrow \mathcal{P}(\NN),\ j\in\ZZ,$$
$$\alpha_j(J)=\nu(j,J)\cup B_j\cup\{\nu(j,1),\nu(j,2),\ldots,\nu(j,j)\},$$
with the convention that $\{\nu(j,1),\nu(j,2),\ldots,\nu(j,j)\}=\emptyset$ for $j\leq 0$. To simplify our computations let us introduce new notation and denote from now on $B_j\cup\{\nu(j,1),\nu(j,2),\ldots,\nu(j,j)\}$ simply by $\tilde{B}_j$. 

With these definitions, one has $$\varphi(p_I)\sim\bigoplus_{j=-\infty}^\infty p_{\alpha_j(I)}=\bigoplus_{j=-\infty}^\infty p_{\nu(j,I)\cup \tilde{B}_j}.$$

Set $\Gamma_0:=\{I_s\ | \ s\in\NN\}$ and define inductively
$$\Gamma_{n+1}:=\{\alpha_j(I)\ | \ j\in\ZZ,\ I\in\Gamma_n\}.$$
Then $$\varphi^m(Q)\sim\bigoplus_{I\in\Gamma_m}p_I.$$

We will prove that $\varphi^m(Q)$ is not properly infinite by applying R\o rdam's criterion (Proposition \ref{infiniteeisnotsubprojection}), showing that for each $m\geq 1$ there is an injective map
$$t_m:\Gamma_m \rightarrow \NN$$
such that $t_m(I)\in I$ for all $I\in \Gamma_m$.
Once we have this map, it follows that 
$$\varphi^m(Q)\sim\bigoplus_{I\in\Gamma_m}p_I\nsucceq g$$
for any $m\geq 1$. But for each $m$ the projection $g$ is in the ideal of $C (\prod_{j=1}^\infty S^2,\KK)$ given by
$$\left (C (\prod_{j=1}^\infty S^2,\KK)\right )\varphi^m(Q) \left ( C(\prod_{j=1}^\infty S^2,\KK)\right ).$$ 
Then $g\preceq l\cdot \varphi^m(Q)$ for some $l\in\NN$ (\cite{RLL} Exercise 4.8) and an application of Lemma \ref{notproperlyinfinite} shows that none of the projections $\varphi^m(Q)$, $m\in\NN$, is properly infinite. By Lemma \ref{finitenessargument} this implies that the projection $Q$ is finite.\\

\noindent The maps $t_m$ are defined inductively as follows: For $m=1$, note that 
$$\Gamma_1=\{\nu(j,I_s)\cup \tilde{B}_j\ | \ j\in\ZZ,s\in\NN\}.$$
For each $j\in \ZZ$, set 
$$\Gamma_1^j:=\{\nu(j,I_s)\cup \tilde{B}_j\ | \ s\in\NN\}=:\{J_s^j\ |\ s\in\NN\}.$$
 Then
$$\Gamma_1=\bigcup_{j=-\infty}^\infty \Gamma_1^j=\{J_s^j\ |\ s\in\NN, j\in\ZZ\}\mbox{, and }\Gamma_1^{j_1}\cap\Gamma_1^{j_2}=\emptyset\mbox{ for $j_1\neq j_2$.}$$
(The latter property holds, because $\nu$ was chosen to be injective.)

Since $k\cdot g\preceq  Q$, but $(k+1)\cdot g\npreceq Q$, we know by Proposition \ref{infiniteexactnumberoftrivialsubprojections} that for any finite subset $F\subseteq \NN$
\begin{equation*}
 \left | \bigcup_{s\in F}I_s\right |+k\geq  |F|,
\tag{$\ast \ast \ast$}\end{equation*}
and in the case $k\geq 1$ that there is some finite subset $F_0$ such that
$$\left | \bigcup_{s\in F_0}I_s\right |+k=  |F_0|.$$
If $k=0$, we set $F_0$ to be the empty set.

After choosing such a finite subset $F_0$, for any finite subset $F\supseteq F_0$ we must have  
$$\left |\left  ( \bigcup_{s\in F}I_s\right ) \setminus \left ( \bigcup_{s\in F_0}I_s\right ) \right |\geq |F\setminus F_0|,$$
since, otherwise, the finite subset $F$ would violate the inequality ($\ast \ast \ast$). By injectivity of $\nu$ we get for each $j\in\ZZ$ that
$$\left | \bigcup_{s\in F_0}\nu(j,I_s)\right |+k=  |F_0|,$$
and
$$\left |\left  ( \bigcup_{s\in F}\nu(j,I_s)\right ) \setminus \left ( \bigcup_{s\in F_0}\nu(j,I_s)\right ) \right |\geq |F\setminus F_0|.$$

Then by Hall's marriage theorem one can find for each $j$ an injective mapping
$$t_1^j:\Gamma_1^j \rightarrow \NN$$
such that for all $J_s^j= \left (\nu(j,I_s)\cup \tilde{B}_j\right ) \in\Gamma_1^j$,
$$t_1^j(J_s^j)\in J_s^j\mbox{ , and }t_1^j\left (J_s^j\right ) \notin B_j \mbox{ whenever $s\notin F_0$.}$$

(Using the cardinality of each $B_j\subseteq\tilde{B}_j$, $\left | B_j\right |=k$, we are able to construct the injective map $$t_1^j:\{J_s^j\ |\ s\in F_0\} \rightarrow \{J_s^j\ |\ s\in F_0\}$$ successively in $s$, choosing different elements of $B_j$ for different values of $s$.)

By injectivity of $\nu$ and pairwise disjointness of the sets $B_j,\ j\in\ZZ,$ there is then an injective map 
$$t_1: \{J_s^j\ |\ s\in\NN, j\in\ZZ\}=\Gamma_1\rightarrow \NN.$$
We have finished defining an injective map $t_1:\Gamma_1 \rightarrow\NN$.

Inductively we define $t_{m+1}:\Gamma_{m+1}\rightarrow \NN$ after definition of $t_m:\Gamma_{m}\rightarrow \NN$ by
$$t_{m+1}(\alpha_j(I)):=\nu(j,t_m(I))$$
for $\alpha_j(I)\in\Gamma_{m+1}$ (and $I\in\Gamma_m$).

With this choice the map $t_{m+1}$ is injective. Indeed, the equations
$$\begin{array}{ccc} t_{m+1}(\alpha_j(I))&=&t_{m+1}(\alpha_{\tilde{j}}(\tilde{I}))\\ \shortparallel & &\shortparallel\\ \nu(j,t_m(I))& &\nu(\tilde{j},t_m(\tilde{I}))\end{array}$$
imply by injectivity of $\nu$ that
$$ j=\tilde{j}\mbox{, and }t_m(I)=t_m(\tilde{I}).$$
By the induction hypothesis, $t_m$ was chosen to be injective, and hence
$$I=\tilde{I}.$$
\\
For each $m\in\NN$ we ended up with an injective map $$t_m:\Gamma_m\rightarrow \NN$$
such that $t_m(I)\in I$ for all $I\in \Gamma_m$, which is all that was left to construct.
\end{proof}

\begin{corollary}\label{nonfullequalsstablyfinite}
Let Let $$Q=\bigoplus_{j=1}^\infty p_{I_j}\in \MCZK.$$

Then $Q$ is non-full if, and only if, $Q$ is stably finite.
\end{corollary}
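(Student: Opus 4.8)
The plan is to prove the two implications separately, using the combinatorial characterization of non-fullness (Theorem \ref{nonfull}) and the finiteness theorem (Theorem \ref{finitemultiplierproj}) as black boxes, so that the corollary becomes essentially a bookkeeping assembly of the two main results.

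First I would treat the direction that stable finiteness implies non-fullness, arguing by contraposition. Suppose $Q$ is full. As recorded in the introduction, fullness of a projection in a multiplier algebra forces some multiple $m\cdot Q$ to be equivalent to the multiplier unit $\mathbbm{1}$ (\cite{RLL}, Exercise 4.8). Since $\CZK$ is stable, its multiplier unit $\mathbbm{1}$ is properly infinite (\cite{R5}, Lemma 3.4) and hence infinite; therefore $m\cdot Q\sim\mathbbm{1}$ is an infinite multiple of $Q$, so $Q$ is not stably finite. This settles one implication with no new work.

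Second, I would establish that non-fullness implies stable finiteness. Fix $m\in\NN$; the goal is to show that $m\cdot Q$ is finite. The key observation is that
$$m\cdot Q=\bigoplus_{j=1}^\infty m\cdot p_{I_j}$$
is again a multiplier projection of the special form $(\ast)$: it is the infinite direct sum of tensor products of Bott projections indexed by the family in which each $I_j$ is repeated $m$ times. By Theorem \ref{nonfull}, non-fullness of $Q$ supplies, for this particular $m$, a natural number $N(m)$ with $N(m)\cdot g\npreceq m\cdot Q$. Feeding the projection $m\cdot Q$ together with $k:=N(m)$ into Theorem \ref{finitemultiplierproj} then yields that $m\cdot Q$ is finite. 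Since $m$ was arbitrary, every multiple of $Q$ is finite, i.e. $Q$ is stably finite.

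The only point requiring care, and the main (if modest) obstacle, is verifying that the multiple $m\cdot Q$ genuinely falls within the scope of Theorem \ref{finitemultiplierproj}, that is, that it is literally of the form $\bigoplus_j p_{I'_j}$ with each $p_{I'_j}$ a tensor product of Bott projections. Once this identification is in place, the corollary is an immediate combination of the two main theorems, all the substantive work having already been carried out in the proof of Theorem \ref{finitemultiplierproj} (the marriage-theorem construction of the maps $t_m$ and the finiteness argument via Lemma \ref{finitenessargument}).
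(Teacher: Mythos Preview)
Your proposal is correct and follows essentially the same route as the paper: the paper's proof is the terse sentence ``if all multiples $n\cdot Q$ are finite then $n\cdot Q\nsucceq\mathbbm{1}$, so $Q$ is not full; the converse combines Theorem~\ref{finitemultiplierproj} with Theorem~\ref{nonfull}'', and you have simply spelled out those two combinations in detail. One small imprecision: the cited exercise yields $\mathbbm{1}\preceq m\cdot Q$ rather than $\mathbbm{1}\sim m\cdot Q$, but since $\mathbbm{1}$ is properly infinite this already forces $m\cdot Q$ to be infinite (and in fact equivalence does follow), so your conclusion stands.
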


\begin{proof}
If all multiples $n\cdot Q$ of $Q$ are finite, then $n\cdot Q\nsucceq \mathbbm{1}$ for any $n\in\NN$ and $Q$ can't be full. The converse direction follows from combining Theorem \ref{finitemultiplierproj} with Theorem \ref{nonfull}.
\end{proof}

\noindent If a multiplier projection of the form $$Q=\bigoplus_{j=1}^\infty p_{I_j}$$ is full, then $\mathbbm{1}\preceq m\cdot Q$ for some $m\in\NN$. Hence some multiple of $Q$ is properly infinite. The projection $Q$ itself might be finite though (see \cite{R1}).

On the other hand if $Q$ is non-full, then $Q$ is stably finite by Corollary \ref{nonfullequalsstablyfinite}.\\

\noindent Summarized, the results state that  every multiplier projection in $\mathcal{M}(C(\prod_{j=1}^\infty S^2,\KK))$ of the special form (*) considered above is either non-full and stably finite , or full and stably properly infinite. \\

{
\small
\bibliographystyle{ieeetr}

\end{document}